\documentclass{article}

\usepackage{arxiv}

\usepackage{amsfonts, amsthm}

\usepackage[utf8]{inputenc} 
\usepackage[T1]{fontenc}    
\usepackage{bookmark}       
\usepackage{url} 
\usepackage{graphicx}

\newtheorem{proposition}{Proposition}
\newtheorem{remark}{Remark}
\newtheorem{theorem}{Theorem}

\title{Means in money exchange operations }

\author{ 
	\href{https://orcid.org/0000-0003-0480-1852}{\includegraphics[scale=0.06]{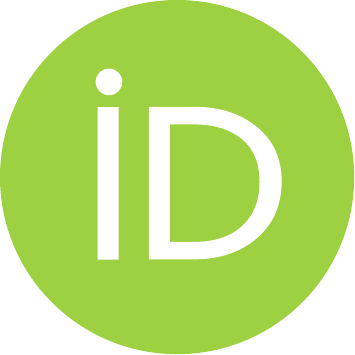}\hspace{1mm}Jacek Bojarski} \\
	Institute of Mathematics \\
	University of Zielona G\'{o}ra \\
	Szafrana 4A, PL 65-516 Zielona G\'{o}ra, Poland \\
	\texttt{j.bojarski@wmie.uz.zgora.pl} \\
	Cinkciarz.pl \\
	Sienkiewicza 9, PL 65-001 Zielona Góra, Poland \\
	\texttt{j.bojarski@cinkciarz.pl}
	\And
	\href{https://orcid.org/0000-0000-0000-0000}{\includegraphics[scale=0.06]{orcid.pdf}\hspace{1mm}Janusz Matkowski} \\
	Institute of Mathematics \\
	University of Zielona G\'{o}ra \\
	Szafrana 4A, PL 65-516 Zielona G\'{o}ra, Poland \\
	\texttt{j.matkowski@wmie.uz.zgora.pl}
}

\begin{document}
	
\maketitle	

\begin{abstract}
It is observed that in some money exchange operations, the applied $n$%
-variable mean $M$ should be self reciprocally-conjugate, i.e. it should
satisfy the equality 
\[
M\left( x_{1},\ldots,x_{n}\right) M\left( \frac{1}{x_{1}},\ldots,\frac{1}{x_{n}}
\right) =1,\quad x_{1},\ldots,x_{n}>0. 
\]
The main result says that the only weighted quasiarithmetic mean satisfying
this condition is the weighet geometric mean.
\end{abstract}

\keywords{Mean \and Money exchange \and Weighted quasiarithmetic mean \and Functional equation}

{\bfseries \emph{2010 Mathematics Subject Classification}}~{Primary 91B26, 26E60 Secondary 39B22}

\section{Introduction}\label{sec:1}


Motivated by some money exchange operations, we consider the $n$-variable
means $M$ acting in the interval $\left( 0,\infty \right) $ and satisfying
the condition 
\begin{equation}\label{eq:1}
M\left( x_{1},\ldots,x_{n}\right) M\left( \frac{1}{x_{1}},\ldots,\frac{1}{x_{n}}%
\right) =1,\quad x_{1},\ldots,x_{n}>0. 
\end{equation}%
The role of this condition, on the model of work of two currency market
analysts, acting in two different countries, is explained in Section \ref{sec:2}. In
Section \ref{sec:3} we recall the basic notions concerning means (\cite{Bullen}). For
a homeomorphic mapping $\varphi $ and a mean $M$ on $(0,\infty)$, we define 
$M^{\left[ \varphi \right]}$, a $\varphi$-conjugate mean, and we
remark that a mean $M$ satisfies condition (1), iff $M$ is self-reciprocally
conjugate, which holds true, if and only if $M^{\left[ \exp \right]}$ is an
odd mean on $\mathbb{R}$ (Remark \ref{rem:3}). In Section \ref{sec:4} we recall some basic facts
on weighted quasiarithmetic means. In Section \ref{sec:5} we determine the form of all
odd weighted quasiarithmetic means in $\mathbb{R}$ (Proposition \ref{prop:1}), and
then, making use of Remark \ref{rem:3}, we establish the form of all weighted
quasiarithmetic means in $\left( 0,\infty \right) $ satisfying condition (\ref{eq:1})
(Theorem \ref{the:1}). Assuming homogeneity of the mean $M$, a "good" property of
a mean, this result leads to Theorem \ref{the:2}, which says that the geometric
weighted mean $M=$ $\mathcal{G}_{p_{1},\ldots,p_{n}},$%
\[
\mathcal{G}_{p_{1},\ldots,p_{n}}\left( x_{1},\ldots,x_{n}\right)
:=x_{1}^{p_{1}}\cdot \ldots\cdot x_{n}^{p_{n}}, \quad x_{1},\ldots,x_{n}>0 \nonumber
\]
is the only homogeneous $n$-variable weighted quasiarithmetic mean with
weights $p_{1},\ldots,p_{n}>0$, $p_{1}+\ldots+p_{n}=1$, satisfying condition $(\ref{eq:1})$.

Our considerations show that the geometric mean is the most proper tool in
the money exchange operations.

\section{Motivation} \label{sec:2}

To explain the problem, consider, for example two currency market analysts
in Great Britain and the United States, dealing with GBP and USD. For the
sake of clarity, let us assume that the bid and ask rates are the same and
no rounding is applied.

The UK analyst analyses the USD/GBP rate (denoted by $x$), where USD is the
base currency and GPB is the quote currency. The analyst in the United
States analyses the rate of the same pair, but in a different ordering
GBP/USD (denote it by $y$), where GBP is the base currency and USD is the
quote currency. Then, to avoid arbitration, for any given time $t$ the
condition $x_{t}\cdot y_{t}=1$ must be met. Suppose further that both
analysts record the rates at the same times $t_{1}$, $t_{2}$, $\ldots $, $%
t_{n}$. Then, in Great Britain, the analyst will observe USD/GBP exchange
rates
\[
\left( x_{t_{1}},x_{t_{2}},\ldots ,x_{t_{n}}\right) ,
\]
and in the United States, GBP/USD exchange rates will be: 
\[
\left( y_{t_{1}},y_{t_{2}},\ldots ,y_{t_{n}}\right) =\left( \frac{1}{%
x_{t_{1}}},\frac{1}{x_{t_{2}}},\ldots ,\frac{1}{x_{t_{n}}}\right).
\]

Note that based on their observations, analysts know what the other analyst
is observing. Analysts use many parameters/indicators to facilitate
inference. The basic one, frequently used, is the arithmetic mean of the
course values (hourly, daily, weekly, etc.). But it is easy to see that the
equality 
\[
\frac{1}{n}\sum_{j=1}^{n}x_{t_{j}}=\frac{1}{\frac{1}{n}\sum_{j=1}^{n}\frac{1}{x_{t_{j}}}}
\]
holds if and only if $x_{t_{1}}=\ldots=x_{t_{n}},$ which means that analysts
knowing their own arithmetic mean do not know what the mean of the other
analyst is.

In this situation appears a natural problem of characterization of means $M$
satisfying the relationship 
\[
M\left( x_{1},x_{2},\ldots ,x_{n}\right) =\frac{1}{M\left( \frac{1}{x_{1}},%
\frac{1}{x_{2}},\ldots ,\frac{1}{x_{n}}\right) },\quad \;n\geq 2,\;x_{i}>0,\;
i=1,2,\ldots ,n.
\]

\section{Some preliminaries and remarks on condition~(\ref{eq:1})} \label{sec:3}

Let $I\subset \mathbb{R}$ be an interval and let $n\in \mathbb{N}$, $n\geq 2$
be fixed. \\
Recall that a function $M:I^{n}\rightarrow I$ is called an $n$-\textit{%
variable mean} in $I$, if 
\[
\min \left( x_{1},\dots,x_{n}\right) \leq M\left( x_{1},\ldots,x_{n}\right) \leq
\max \left( x_{1},\ldots,x_{n}\right), \quad \left(
x_{1},\ldots,x_{n}\right) \in I^{n},
\]
and the mean is called \textit{strict,} if these inequalities are sharp for
all nonconstant sequences $\left( x_{1},\ldots,x_{n}\right) \in I^{n}$.

A mean $M:I^{n}\rightarrow I$ is called \textit{symmetric}, if $M\left(
x_{\sigma \left( 1\right) },\ldots,x_{\sigma \left( n\right) }\right) =M\left(
x_{1},\ldots,x_{n}\right) $ for each permutation $\sigma $ of $\left\{
1,\ldots,n\right\} $.

A mean $M:\left( 0,\infty \right) ^{n}\rightarrow \left( 0,\infty \right) $
is called \textit{homogeneous} if 
\[
M\left( tx_{1},\ldots,tx_{n}\right) = t M\left( x_{1},\ldots,x_{n}\right), \quad t,x_{1},\ldots,x_{n}>0.
\]

\begin{remark} \label{rem:1}
Let $J\subset \mathbb{R}$ be an interval, $\varphi :$ $J\rightarrow I$ be a
homeomorphic mapping of $J$ onto $I$. If $M:I^{n}\rightarrow I$ is a mean,
then the function $M^{\left[ \varphi \right] }:J^{n}\rightarrow J$ defined by
\[
M^{\left[ \varphi \right] }\left( x_{1},\ldots,x_{n}\right) :=\varphi
^{-1}\left( M\left( \varphi \left( x_{1}\right) ,\ldots,\varphi \left(
x_{n}\right) \right) \right), \left( x_{1},\ldots,x_{n}\right)
\in J^{n},
\]
is a mean in $J.$ The mean $M^{\left[ \varphi \right] }$ is called $\varphi $%
-conjugate of $M$. \\
Moreover, if $J=I$ and $M^{\left[ \varphi \right] }=M$, then $M$ is said to
be $\varphi $-self conjugate.
\end{remark}

Taking for $\varphi $ the reciprocal function, i.e. $\varphi \left( t\right)
=\frac{1}{t}$ for $t>0$, we get the following

\begin{remark} \label{rem:2}
A mean $M:\left( 0,\infty \right) ^{n}\rightarrow \left( 0,\infty \right) $
satisfies condition $(\ref{eq:1})$ if and only if, it is reciprocally self-conjugate.
\end{remark}
\noindent
Let us note the following obvious

\begin{remark} \label{rem:3}
Let $n\in \mathbb{N}$, $n\geq 2$. A mean $M:\left( 0,\infty \right)
^{n}\rightarrow \left( 0,\infty \right) $ satisfies $(\ref{eq:1})$ if and only if, the
exponentially conjugate mean $M^{\left[ \exp \right] }:\mathbb{R}%
^{n}\rightarrow \mathbb{R}$,%
\[
M^{\left[ \exp \right] }\left( t_{1},\ldots,t_{n}\right) :=\ln M\left(
e^{t_{1}},\ldots,e^{t_{n}}\right),\; t_{1},\ldots,t_{n}\in \mathbb{R},
\]
it is odd, that is
\[
M^{\left[ \exp \right] }\left( -t_{1},\ldots,-t_{n}\right) =-M^{\left[ \exp %
\right] }\left( t_{1},\ldots,t_{n}\right),\; t_{1},\ldots,t_{n}\in 
\mathbb{R}.
\]
\end{remark}%
\noindent
By the way let us note that there is no even mean.

\section{Weighted quasiarithmetic means} \label{sec:4}

Recall a description of the quasiarithmetic means, one of the most important
classes of means. These means are closely related to the conjugacy notion,
as they are conjugate to the weighted arithmetic means.

\begin{remark} [see for instance Chapter III in \cite{HLP}] \label{rem:4}
Let $\varphi :I\rightarrow \mathbb{R}$ be a continuous and strictly monotonic function, and let $%
p_{1},\ldots,p_{n}>0$ be such that $p_{1}+\ldots+p_{n}=1$. Then the function $%
\mathcal{A}_{p_{1},\ldots,p_{n}}^{\left[ \varphi \right] }:I^{n}\rightarrow I,$
given by 
\[
\mathcal{A}_{p_{1},\ldots,p_{n}}^{\left[ \varphi \right] }\left(
t_{1},\ldots,t_{n}\right) :=\varphi ^{-1}\left( \sum_{j=1}^{n}p_{j}\varphi
\left( t_{j}\right) \right),
\]
is a strict mean in $I,$ and it is called a \textit{weighted quasiarithmetic
mean; }the function $\varphi $ is referred to as its \textit{generator, }and 
$p_{1},\ldots,p_{n}$ as its weights.

$\mathcal{A}_{p_{1},\ldots,p_{n}}^{\left[ \varphi \right] }$ is symmetric iff $%
p_{1}=\ldots=p_{n}=\frac{1}{n};$ and then this mean, denoted by $\mathcal{A}^{%
\left[ \varphi \right] },$ is called \textit{quasiarithmetic}. Moreover, if $%
\varphi ,\psi :I\rightarrow \mathbb{R}$, then $\mathcal{A}%
_{p_{1},\ldots,p_{n}}^{\left[ \psi \right] }=\mathcal{A}_{p_{1},\ldots,p_{n}}^{%
\left[ \varphi \right] }$ if and only if $\psi =a\varphi +b$ for some real $%
a,b$, $a\neq 0.$
\end{remark}

\begin{remark}[\cite{HLP}, p.68] \label{rem:5}
Let $I=\left( 0,\infty \right) $. The following
conditions are equivalent:

\begin{itemize}

\item[$(i)$] the mean $\mathcal{A}_{p_{1},\ldots,p_{n}}^{\left[ \varphi \right] }$ is
homogeneous;

\item[$(ii)$] for some $a,b,r\in \mathbb{R}$, $a\neq 0$, 
\[
\varphi \left( t\right) =\left\{ 
\begin{array}{ccc}
a\,t^{r}+b & \mbox{if} & r\neq 0 \\ 
a\log t+b & \mbox{if} & r=0%
\end{array}%
\right. ,\quad t\in \left( 0,\infty \right) ;
\]

\item[$(iii)$] for some $r\in \mathbb{R}$,%
\[
	\mathcal{A}_{p_{1},\ldots,p_{n}}^{\left[ \varphi \right] }=\left\{ 
		\begin{array}{ccc}
		\left( \sum_{j=1}^{n}p_{j}x_{j}^{r}\right) ^{1/r} & \mbox{if} & r\neq 0 \\ 
		\prod\limits_{j=1}^{n}x_{j}^{p_{j}} & \mbox{if} & r=0%
		\end{array}%
	\right. ,\quad x_{1},\ldots,x_{n}\in \left( 0,\infty \right).
\]

\end{itemize}
\end{remark}
Applying the last result of Remark \ref{rem:1} we obtain the following

\begin{proposition} \label{prop:1}
Let $n\in \mathbb{N}$, $n\geq 2$. Suppose that $\varphi :\left( 0,\infty
\right) \rightarrow \mathbb{R}$ is a continuous strictly monotonic function,
and $p_{1},\ldots,p_{n}>0$ are such that $p_{1}+\ldots+p_{n}=1$. Then the
following two conditions are equivalent:
\begin{itemize}
\item[$(i)$] the weighted quasiarithmetic mean $\mathcal{A}_{p_{1},\ldots,p_{n}}^{\left[
\varphi \right] }:\left( 0,\infty \right) ^{n}\rightarrow \left( 0,\infty
\right)$ satisfies~$(\ref{eq:1})$, i.e.
\[
\mathcal{A}_{p_{1},\ldots,p_{n}}^{\left[ \varphi \right] }\left(
x_{1},\ldots,x_{n}\right) \mathcal{A}_{p_{1},\ldots,p_{n}}^{\left[ \varphi \right]
}\left( \frac{1}{x_{1}},\ldots,\frac{1}{x_{n}}\right) =1, \quad x_{1},\ldots,x_{n}>0;
\]
\item[$(ii)$] there are \thinspace $a,b\in \mathbb{R}$, $a\neq 0$ such that $%
\varphi $ satisfies the functional equation 
\[
\varphi \left( \frac{1}{x}\right) =a\varphi \left( x\right) +b, \quad x\in \left( 0,\infty \right) .
\]

\end{itemize}

\end{proposition}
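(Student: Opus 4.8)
The plan is to unwind the definitions so that condition~(\ref{eq:1}) becomes a statement purely about the generator $\varphi$. Write $\mathcal{A}=\mathcal{A}_{p_{1},\ldots,p_{n}}^{\left[\varphi\right]}$ and set $u_j=\varphi(x_j)$; since $\varphi$ is a homeomorphism of $(0,\infty)$ onto the interval $J:=\varphi((0,\infty))$, the $u_j$ range independently over all of $J$. Applying $\varphi$ to both sides of~(\ref{eq:1}) and using $\mathcal{A}(x_1,\ldots,x_n)=\varphi^{-1}\bigl(\sum_j p_j u_j\bigr)$, condition~(\ref{eq:1}) is equivalent to
\[
\varphi\!\left(\frac{1}{\varphi^{-1}\!\left(\sum_{j=1}^{n}p_j\varphi(x_j)\right)}\right)=\sum_{j=1}^{n}p_j\,\varphi\!\left(\frac{1}{x_j}\right),\qquad x_1,\ldots,x_n>0.
\]
Introduce the auxiliary function $g:J\to\mathbb{R}$, $g(s):=\varphi\bigl(1/\varphi^{-1}(s)\bigr)$, which is well defined and continuous (indeed strictly monotonic, as a composition of the monotone maps $\varphi$, $t\mapsto 1/t$, $\varphi^{-1}$). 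The displayed identity then reads
\[
g\!\left(\sum_{j=1}^{n}p_j u_j\right)=\sum_{j=1}^{n}p_j\,g(u_j),\qquad u_1,\ldots,u_n\in J,
\]
i.e. $g$ is (weighted-)Jensen affine on the interval $J$ with fixed positive weights $p_j$ summing to $1$.

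The next step is to conclude that $g$ is an affine function $g(s)=as+b$. This is a classical fact: a continuous (even: merely Lebesgue measurable, or bounded on a set of positive measure) solution of the weighted Jensen equation $g\bigl(\sum p_j u_j\bigr)=\sum p_j g(u_j)$ on an interval, with $n\ge 2$ and all $p_j\in(0,1)$, must be affine. For completeness I would sketch the usual reduction: fixing all but two of the variables and varying the remaining pair already forces, via the case $u_3=\cdots=u_n$ held fixed, the two-variable relation $g\bigl(p u + (1-p)v\bigr)=p\,g(u)+(1-p)\,g(v)$ for $p=p_1/(p_1+p_2)\in(0,1)$; continuity then upgrades this to full (Jensen, hence by continuity affine) convexity-and-concavity of $g$, so $g$ is affine. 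Since $g$ is also strictly monotonic, $a\neq 0$; this is automatic and costs nothing.

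Finally I would translate $g(s)=as+b$ back to $\varphi$. By definition of $g$, for every $x\in(0,\infty)$ we have $g(\varphi(x))=\varphi(1/x)$, so $g(s)=as+b$ with $s=\varphi(x)$ yields exactly
\[
\varphi\!\left(\frac{1}{x}\right)=a\,\varphi(x)+b,\qquad x\in(0,\infty),
\]
which is $(ii)$. Conversely, assuming $(ii)$, one checks $(i)$ by direct substitution: apply $\varphi$ to $\mathcal{A}(1/x_1,\ldots,1/x_n)$, use $\varphi(1/x_j)=a\varphi(x_j)+b$, factor out, and re-apply $\varphi^{-1}$, to recover the displayed $g$-identity and hence~(\ref{eq:1}); the bookkeeping here is the routine part. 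The only genuine obstacle is the rigidity step, guaranteeing that a continuous solution of the weighted Jensen equation is affine — everything else is definition-chasing. I would either cite the standard reference on the Jensen functional equation or include the two-paragraph reduction above; since $\varphi$ (and hence $g$) is assumed continuous, no measurability subtleties arise and the argument is short.
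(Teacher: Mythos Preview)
Your argument is correct and, once unpacked, coincides with the paper's. The paper's proof is shorter because it recognises condition~(\ref{eq:1}) as the equality of two weighted quasiarithmetic means,
\[
\mathcal{A}_{p_1,\ldots,p_n}^{[\varphi]}=\mathcal{A}_{p_1,\ldots,p_n}^{[\varphi\circ\beta]}\qquad(\beta(x)=1/x),
\]
and then invokes the classical criterion (Remark~\ref{rem:4}, \cite{HLP}) that two such means coincide iff their generators are affinely related, i.e.\ $\varphi\circ\beta=a\varphi+b$. Your auxiliary function $g=\varphi\circ\beta\circ\varphi^{-1}$ and the weighted Jensen equation it satisfies are exactly the standard proof of that criterion, so you are re-deriving what the paper cites. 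The trade-off: your write-up is self-contained, the paper's is one line.

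One small slip in your reduction sketch: fixing $u_3=\cdots=u_n$ at a constant does not directly yield the two-variable identity with weight $p=p_1/(p_1+p_2)$. The clean move is to set $u_2=\cdots=u_n=v$, which gives $g\bigl(p_1u+(1-p_1)v\bigr)=p_1 g(u)+(1-p_1)g(v)$ with $p=p_1\in(0,1)$; from there continuity forces $g$ affine (e.g.\ via the Dar\'oczy--P\'ales identity and Jensen's equation, as the paper does in the analogous Proposition~\ref{prop:2}, or by any standard reference). This is cosmetic and does not affect the validity of your plan.
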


\begin{proof}
Note that (\ref{eq:1}) holds if and only if 
\[
\mathcal{A}_{p_{1},\ldots,p_{n}}^{\left[ \varphi \right] }\left(
x_{1},\ldots,x_{n}\right) =\frac{1}{\mathcal{A}_{p_{1},\ldots,p_{n}}^{\left[
\varphi \right] }\left( \frac{1}{x_{1}},\ldots,\frac{1}{x_{n}}\right) }, \quad x_{1},\ldots,x_{n}>0,
\]
that is, if and only if 
\[
\mathcal{A}_{p_{1},\ldots,p_{n}}^{\left[ \varphi \right] }=\mathcal{A}%
_{p_{1},\ldots,p_{n}}^{\left[ \varphi \circ \beta \right] },
\]
where $\beta :\left( 0,\infty \right) \rightarrow \left( 0,\infty \right) $
denotes the reciprocal function $\beta \left( x\right) =\frac{1}{x}$. In
view of Remark \ref{rem:1} (see also \cite{HLP}, p.66), this equality holds if and
only if \thinspace there are $a,b\in \mathbb{R}$, $a\neq 0$ such that $%
\left( \varphi \circ \beta \right) \left( x\right) =a\varphi \left( x\right)
+b$ \ for all \ $x\in \left( 0,\infty \right) $, that is if, and only if (2)
holds.
\end{proof}

\section{Odd weighted quasiarithmetic means and main results} \label{sec:5}

By Remark \ref{rem:3}, a mean $M$ on $\left( 0,\infty \right) $ satisfies condition
(\ref{eq:1}) iff the mean $M^{\left[ \exp \right] }$ is odd on $\mathbb{R}.$
Stimulated by this fact we prove

\begin{proposition} \label{prop:2}
Let $n\in \mathbb{N}$, $n\geq 2$ and let the positive numbers $%
p_{1},\ldots,p_{n}$ such that $p_{1}+\ldots+p_{n}=1$ be fixed. Suppose that $%
\varphi :\mathbb{R\rightarrow R}$ is a continuous strictly monotonic function%
$.$ \\
Then the following conditions are equivalent:
\begin{itemize}
\item[$(i)$] \ \ the quasiarithmetic weighted mean $\mathcal{A}_{p_{1},\ldots,p_{n}}^{%
\left[ \varphi \right] }:\mathbb{R}^{n}\rightarrow \mathbb{R}$ is odd;
\item[$(ii)$] \ the function $\varphi -\varphi \left( 0\right) $ is odd or,
equivalently, 
\[
\varphi \left( -t\right) +\varphi \left( t\right) =2\varphi \left( 0\right) 
, \quad t\in \mathbb{R}.
\]
\end{itemize}
\end{proposition}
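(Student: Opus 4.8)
The plan is to prove the equivalence directly, using the same idea that underlies Proposition~\ref{prop:1} but now with the reflection $t\mapsto -t$ in place of reciprocation. First I would observe that $\mathcal{A}_{p_{1},\ldots,p_{n}}^{[\varphi]}:\mathbb{R}^{n}\to\mathbb{R}$ is odd precisely when
\[
\mathcal{A}_{p_{1},\ldots,p_{n}}^{[\varphi]}\bigl(-t_{1},\ldots,-t_{n}\bigr)=-\mathcal{A}_{p_{1},\ldots,p_{n}}^{[\varphi]}\bigl(t_{1},\ldots,t_{n}\bigr),\qquad t_{1},\ldots,t_{n}\in\mathbb{R}.
\]
Writing out the left-hand side via the definition of the quasiarithmetic mean and substituting $s_{j}=-t_{j}$ (or equivalently composing with $\alpha(t)=-t$ on the inside), the identity becomes $\mathcal{A}_{p_{1},\ldots,p_{n}}^{[\varphi\circ\alpha]}=\mathcal{A}_{p_{1},\ldots,p_{n}}^{[-\varphi]}$, since the generator of the function $(t_{1},\ldots,t_{n})\mapsto-\mathcal{A}_{p_{1},\ldots,p_{n}}^{[\varphi]}(t_{1},\ldots,t_{n})$ expressed in the same variables is $-\varphi$, and the left side has generator $\varphi\circ\alpha$. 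Both $\varphi\circ\alpha$ and $-\varphi$ are continuous and strictly monotonic on $\mathbb{R}$, so the uniqueness-of-generator statement at the end of Remark~\ref{rem:4} applies.

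The next step is to invoke that uniqueness: $\mathcal{A}_{p_{1},\ldots,p_{n}}^{[\varphi\circ\alpha]}=\mathcal{A}_{p_{1},\ldots,p_{n}}^{[-\varphi]}$ holds if and only if there exist real constants $a,b$ with $a\neq0$ such that
\[
\varphi(-t)=a\bigl(-\varphi(t)\bigr)+b=-a\,\varphi(t)+b,\qquad t\in\mathbb{R}.
\]
So $(i)$ is equivalent to the existence of such $a,b$. What remains is to show that the only admissible pair is $a=1$ and $b=2\varphi(0)$, which will convert this affine relation into the stated oddness of $\varphi-\varphi(0)$. Setting $t=0$ gives $\varphi(0)=-a\varphi(0)+b$, i.e. $b=(1+a)\varphi(0)$. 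Substituting $-t$ for $t$ in the relation and combining with the original gives $\varphi(t)=-a\bigl(-a\varphi(t)+b\bigr)+b=a^{2}\varphi(t)+(1-a)b$, hence $(1-a^{2})\varphi(t)=(1-a)b$ for all $t$. Since $\varphi$ is strictly monotonic it is nonconstant, so we must have $1-a^{2}=0$, i.e. $a=\pm1$; and $a=-1$ is impossible because it would force $\varphi(-t)=\varphi(t)$ for all $t$, contradicting strict monotonicity (there are no even injective functions). Therefore $a=1$, whence $b=2\varphi(0)$ and $\varphi(-t)+\varphi(t)=2\varphi(0)$, which is exactly $(ii)$. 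Conversely, if $(ii)$ holds then $\varphi\circ\alpha=-\varphi+2\varphi(0)$ is an affine image of $-\varphi$, so the two generators produce the same weighted quasiarithmetic mean, giving $(i)$.

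I do not expect a serious obstacle here; the argument is a clean application of Remark~\ref{rem:4}. The one point that needs a little care is the bookkeeping in the first paragraph — correctly identifying that the outer negation on $-\mathcal{A}_{p_{1},\ldots,p_{n}}^{[\varphi]}$ corresponds to replacing the generator $\varphi$ by $-\varphi$ (because $\varphi^{-1}$ of the weighted average gets negated, which is $(-\varphi)^{-1}$ applied to the weighted average of the $-\varphi(t_{j})$), and that composing with $\alpha$ on the inside gives generator $\varphi\circ\alpha$. Once that translation is pinned down, the rest is the short elimination of $a=-1$ and the normalization of $b$, both of which rely only on $\varphi$ being nonconstant. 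It is worth recording explicitly, as the paper already hints with its remark that there is no even mean, that strict monotonicity is what rules out the degenerate branch.
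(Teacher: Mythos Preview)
Your strategy is sound, but the bookkeeping in the first paragraph --- precisely the place you flagged as delicate --- is wrong on both sides, and the two errors merely happen to cancel. The function $(t_1,\ldots,t_n)\mapsto-\mathcal{A}_{p_1,\ldots,p_n}^{[\varphi]}(t_1,\ldots,t_n)$ is not a mean at all (it sends $(t,\ldots,t)$ to $-t$), so it has no generator; and in any case $\mathcal{A}_{p_1,\ldots,p_n}^{[-\varphi]}=\mathcal{A}_{p_1,\ldots,p_n}^{[\varphi]}$ by Remark~\ref{rem:4}, since $-\varphi$ is already an affine image of $\varphi$. Likewise $\mathcal{A}_{p_1,\ldots,p_n}^{[\varphi]}(-t_1,\ldots,-t_n)$ is not $\mathcal{A}_{p_1,\ldots,p_n}^{[\varphi\circ\alpha]}(t_1,\ldots,t_n)$: because $(\varphi\circ\alpha)^{-1}=\alpha\circ\varphi^{-1}=-\varphi^{-1}$, one has
\[
\mathcal{A}_{p_1,\ldots,p_n}^{[\varphi\circ\alpha]}(t_1,\ldots,t_n)=-\varphi^{-1}\Bigl(\sum_{j=1}^{n} p_j\,\varphi(-t_j)\Bigr)=-\mathcal{A}_{p_1,\ldots,p_n}^{[\varphi]}(-t_1,\ldots,-t_n).
\]
Hence oddness of $\mathcal{A}_{p_1,\ldots,p_n}^{[\varphi]}$ is equivalent, directly, to $\mathcal{A}_{p_1,\ldots,p_n}^{[\varphi\circ\alpha]}=\mathcal{A}_{p_1,\ldots,p_n}^{[\varphi]}$, and Remark~\ref{rem:4} then yields $\varphi(-t)=a\varphi(t)+b$. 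From there your elimination argument runs verbatim, only with the roles of $a=1$ and $a=-1$ interchanged: $a=1$ forces $\varphi$ even (impossible), while $a=-1$ gives $b=2\varphi(0)$ and hence (ii). The converse is fine once this translation is fixed.

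For comparison, the paper does \emph{not} invoke Remark~\ref{rem:4} here. Instead it collapses to two variables by grouping weights, sets $\psi=\varphi\circ(-\varphi^{-1})$, uses the Dar\'oczy--P\'ales identity to pass from the $p$-weighted affine equation to Jensen's equation, and then appeals to Kuczma's theorem to conclude $\psi$ is affine, arriving at $\varphi(-t)=a\varphi(t)+b$ by a longer road. Your route --- recognising oddness as an equality of two weighted quasiarithmetic means and reading off the affine relation between their generators --- bypasses all of that machinery and is considerably shorter; the paper's argument, on the other hand, is self-contained in that it does not treat the equality-of-means criterion as a black box.
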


\begin{proof}
To prove the implication (i)$\Longrightarrow $(ii), assume that $\mathcal{A}%
_{p_{1},\ldots,p_{n}}^{\left[ \varphi \right] }$ is an odd function. By the
definition of $\mathcal{A}_{p_{1},\ldots,p_{n}}^{\left[ \varphi \right] }$ it
means that%
\[
\varphi ^{-1}\left( \sum_{j=1}^{n}p_{j}\varphi \left( -t_{j}\right) \right)
=-\varphi ^{-1}\left( \sum_{j=1}^{n}p_{j}\varphi \left( t_{j}\right) \right) 
, \quad t_{1},\ldots,t_{n}\in \mathbb{R}.
\]

Choosing arbitrarily $j_{0}\in \left\{ 2,\ldots,n-1\right\} $ and setting%
\[
p:=p_{1}+\ldots+p_{j_{0}},
\]
\[
t_{1}=\ldots=t_{j_{0}}=s, \quad t_{j_{0}+1}=\ldots=t_{n}=t,
\]
we hence get%
\[
\varphi ^{-1}\left( p\varphi \left( -s\right) +\left( 1-p\right) \varphi
\left( -t\right) \right) =-\varphi ^{-1}\left( p\varphi \left( s\right)
+\left( 1-p\right) \varphi \left( t\right) \right) , \quad s,t\in 
\mathbb{R}.
\]
For $s=\varphi ^{-1}\left( u\right) $, $t=\varphi ^{-1}\left( v\right) $,
we\ have 
\[
\varphi ^{-1}\left( p\varphi \left( -\varphi ^{-1}\left( u\right) \right)
+\left( 1-p\right) \varphi \left( -\varphi ^{-1}\left( v\right) \right)
\right) =-\varphi ^{-1}\left( pu+\left( 1-p\right) v\right) , \quad %
u,v\in \varphi \left( \mathbb{R}\right) .
\]
and, taking $\varphi $ of both sides%
\[
p\varphi \left( -\varphi ^{-1}\left( u\right) \right) +\left( 1-p\right)
\varphi \left( -\varphi ^{-1}\left( v\right) \right) =\varphi \left(
-\varphi ^{-1}\left( pu+\left( 1-p\right) v\right) \right) , \quad %
u,v\in \varphi \left( \mathbb{R}\right) .
\]
Hence, setting 
\begin{equation}\label{eq:2}
\psi :=\varphi \circ \left( -\varphi ^{-1}\right),
\end{equation}
we get 
\[
\psi \left( pu+\left( 1-p\right) v\right) =p\psi \left( u\right) +\left(
1-p\right) \psi \left( v\right) , \quad u,v\in \varphi \left( 
\mathbb{R}\right) .
\]
This equation and the Dar\'{o}czy-Páles identity (see \cite{DarPal})%
\[
p\left( p\frac{u+v}{2}+\left( 1-p\right) u\right) +\left( 1-p\right) \left(
pv+\left( 1-p\right) \frac{u+v}{2}\right) =\frac{u+v}{2},%
x,y\in \mathbb{R}
\]
easily imply that $\psi $ satisfies the Jensen functional equation%
\[
\psi \left( \frac{u+v}{2}\right) =\frac{\psi \left( u\right) +\psi \left(
v\right) }{2}, \quad u,v\in \varphi \left( \mathbb{R}\right) .
\]
Since $\psi $ is continuous in the interval $\varphi \left( \mathbb{R}%
\right) $, in view of Theorem 1, p. 315 in M. Kuczma \cite{Kuczma}, there
are some constant $a,b\in \mathbb{R}$, $a\neq 0$ such that 
\[
\psi \left( u\right) =au+b, \quad u\in \varphi \left( \mathbb{%
R}\right) .
\]
Hence, by (\ref{eq:2}), 
\[
\varphi \circ \left( -\varphi ^{-1}\right) \left( u\right) =au+b, \quad u\in \varphi \left( \mathbb{R}\right) ,
\]
whence, setting $u=\varphi \left( t\right) $, we get%
\begin{equation} \label{eq:3}
\varphi \left( -t\right) =a\varphi \left( t\right) +b, \quad %
t\in \mathbb{R}.
\end{equation}%
Replacing here first $t$ by $-t$, and then applying this equation again, we
get%
\begin{eqnarray*}
\varphi \left( t\right) &=&a\varphi \left( -t\right) +b=a\left[ a\varphi
\left( t\right) +b\right] +b \\
&=&a^{2}\varphi \left( t\right) +ab+b,
\end{eqnarray*}
whence%
\begin{equation} \label{eq:4}
\left( 1-a^{2}\right) \varphi \left( t\right) =ab+b, \quad t\in \mathbb{R}.
\end{equation}%
Since the right side is constant, it follows that $\ a^{2}=1$. Consequently,
either $a=1$ or $a=-1\,.$

If $a=1$ then by (\ref{eq:4}) the number $b$ must be $0$ and, by (\ref{eq:3}) we get 
\[
\varphi \left( -t\right) =\varphi \left( t\right) , \quad t\in 
\mathbb{R},
\]
that is a contradiction, as the function $\varphi $ is strictly monotonic.

If $a=-1$ then $b$ in (\ref{eq:4}) can be arbitrary and (\ref{eq:3}) becomes 
\[
\varphi \left( -t\right) =-\varphi \left( t\right) +b, \quad %
t\in \mathbb{R}.
\]
Setting here $t=0$ gives $\varphi \left( 0\right) =-\varphi \left( 0\right)
+b$, whence $b=2\varphi \left( 0\right) .$ Thus, by (\ref{eq:3}), 
\[
\varphi \left( -t\right) -\varphi \left( 0\right) =-\left[ \varphi \left(
t\right) -\varphi \left( 0\right) \right] , \quad t\in \mathbb{R}%
, 
\]

so the function $\varphi -\varphi \left( 0\right) $ is odd, which proves the
implication (i)$\Longrightarrow $(ii).

To prove the reversed implication assume that $\varphi -\varphi \left(
0\right) $ is odd. Then, the inverse function $\varphi ^{-1}\left( y+\varphi
\left( 0\right) \right) $ is also odd. Applying these facts in turn, by the
definition of $\mathcal{A}_{p_{1},\ldots,p_{n}}^{\left[ \varphi \right] }$, we
have 
\begin{eqnarray*}
\mathcal{A}_{p_{1},\ldots,p_{n}}^{\left[ \varphi \right] }\left(
-t_{1},\ldots,-t_{n}\right) &=&\varphi ^{-1}\left( \sum_{j=1}^{n}p_{j}\varphi
\left( -t_{j}\right) \right) \\
& = & \varphi ^{-1}\left( \sum_{j=1}^{n}p_{j}\left[
\varphi \left( -t_{j}\right) -\varphi \left( 0\right) \right] +\varphi
\left( 0\right) \right) \\
&=&\varphi ^{-1}\left( -\sum_{j=1}^{n}p_{j}\left[ \varphi \left(
t_{j}\right) -\varphi \left( 0\right) \right] +\varphi \left( 0\right)
\right) \\
&=&-\varphi ^{-1}\left( \sum_{j=1}^{n}p_{j}\left[ \varphi \left(
-t_{j}\right) -\varphi \left( 0\right) \right] +\varphi \left( 0\right)
\right) \\
&=&-\varphi ^{-1}\left( \sum_{j=1}^{n}p_{j}\varphi \left( t_{j}\right)
\right) =-\mathcal{A}_{p_{1},\ldots,p_{n}}^{\left[ \varphi \right] }\left(
t_{1},\ldots,t_{n}\right)
\end{eqnarray*}
for all $t_{1},\ldots,t_{n}\in \mathbb{R}$, which completes the proof.
\end{proof}

\section{Main results} \label{sec:6}

We begin this section with the following

\begin{remark} \label{rem:6}
Let $n\in \mathbb{N}$, $n\geq 2,$ and $J\subset \left( 0,\infty \right) $ be
a fixed interval. If $\gamma :J\rightarrow \left( 0,\infty \right) $ is a
continuous strictly monotonic function, and the real numbers $%
p_{1},\ldots,p_{n}>0$ are such that $p_{1}+\ldots+p_{n}=1,$ then

\begin{itemize}

\item[$(i)$] the function $\mathcal{M}_{p_{1},\ldots,p_{n}}^{\left[ \gamma \right]
}:I^{n}\rightarrow I$ given by 
\[
\mathcal{M}_{p_{1},\ldots,p_{n}}^{\left[ \gamma \right] }\left(
x_{1},\ldots,x_{n}\right) :=\gamma ^{-1}\left( \prod\limits_{j=1}^{n}\left[
\gamma \left( x_{j}\right) \right] ^{p_{j}}\right)
\]
is a strict mean in $I$\textit{; }
\item[$(ii)$] \ $\mathcal{M}_{p_{1},\ldots,p_{n}}^{\left[ \gamma \right] }=\mathcal{A}%
_{p_{1},\ldots,p_{n}}^{\left[ \log \circ \gamma \right] }.$
\end{itemize}

\end{remark}

\begin{proof}
Indeed, for arbitrary $x_{1},\ldots.,x_{n}>0$, 
\begin{eqnarray*}
\mathcal{M}_{p_{1},\ldots,p_{n}}^{\left[ \gamma \right] }\ \left(
x_{1},\ldots,x_{n}\right) &=&\gamma ^{-1}\left( \prod\limits_{j=1}^{n}\left[
\gamma \left( x_{j}\right) \right] ^{p_{j}}\right) =\gamma ^{-1}\left( \exp
\left( \log \prod\limits_{j=1}^{n}\left[ \gamma \left( x_{j}\right) \right]
^{p_{j}}\right) \right) \\
&=&\left( \log \circ \gamma \right) ^{-1}\left( \sum_{j=1}^{n}p_{j}\left(
\log \circ \gamma \right) \left( x_{j}\right) \right) \\
&=&\mathcal{A}_{p_{1},\ldots,p_{n}}^{\left[ \log \circ \gamma \right] }\ \left(
x_{1},\ldots,x_{n}\right) ,
\end{eqnarray*}
so (ii) holds true. Since $\mathcal{A}_{p_{1},\ldots,p_{n}}^{\left[ \log \circ
\gamma \right] }$ is a quasiarithmetic mean, condition (ii) implies (i).
\end{proof}
By analogy to the quasiarithmetic means, the mean \ $\mathcal{M}%
_{p_{1},\ldots,p_{n}}^{\left[ \gamma \right] }$ could be called a\textit{\
weighted quasigeometric mean,} the function $\gamma $ its \textit{generator, 
}and $p_{1},\ldots,p_{n},$ the weights.

This remark, Remark \ref{rem:3} and Proposition \ref{prop:1} give the following characterization
of weighted quasiarithmetic (or weighted quasigeometric) means on $\left(
0,\infty \right) $ and satisfying condition (\ref{eq:1}).

\begin{theorem} \label{the:1}
Let $n\in \mathbb{N}$, $n\geq 2,$ and let the positive numbers $%
p_{1},\ldots,p_{n}$ such that $p_{1}+\ldots+p_{n}=1$ be fixed. Suppose that $%
\gamma :\left( 0,\infty \right) \rightarrow \left( 0,\infty \right) $ is
continuous and strictly monotonic. Then the following conditions are
equivalent:
\begin{itemize}
\item[$(i)$] \ \ the mean $\mathcal{M}_{p_{1},\ldots,p_{n}}^{\left[ \gamma \right]
}:\left( 0,\infty \right) \rightarrow \left( 0,\infty \right) \mathbb{\ }$\
satisfies condition%
\[
\mathcal{M}_{p_{1},\ldots,p_{n}}^{\left[ \gamma \right] }\left(
x_{1},\ldots,x_{n}\right) \mathcal{M}_{p_{1},\ldots,p_{n}}^{\left[ \gamma \right]
}\left( \frac{1}{x_{1}},\ldots,\frac{1}{x_{n}}\right) =1, \quad %
x_{1},\ldots,x_{n}>0;
\]
\item[$(ii)$] \ the generator $\gamma $ satisfies the functional equation%
\[
\gamma \left( x\right) \gamma \left( \frac{1}{x}\right) =\left[ \gamma
\left( 1\right) \right] ^{2}, \quad x>0.
\]
\end{itemize}
\end{theorem}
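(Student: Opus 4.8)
The plan is to reduce the statement to Proposition~\ref{prop:1} by means of the identity in Remark~\ref{rem:6}(ii). Since $\log$ is a homeomorphism of $(0,\infty)$ onto $\mathbb{R}$ and $\gamma$ is continuous and strictly monotonic, the function $\varphi:=\log\circ\gamma:(0,\infty)\to\mathbb{R}$ is continuous and strictly monotonic, and Remark~\ref{rem:6}(ii) gives $\mathcal{M}_{p_{1},\ldots,p_{n}}^{[\gamma]}=\mathcal{A}_{p_{1},\ldots,p_{n}}^{[\varphi]}$. Hence condition~(i) of the theorem is exactly the assertion that the weighted quasiarithmetic mean $\mathcal{A}_{p_{1},\ldots,p_{n}}^{[\varphi]}$ satisfies~$(\ref{eq:1})$, and by Proposition~\ref{prop:1} this holds if and only if there exist $a,b\in\mathbb{R}$, $a\neq 0$, with
\[
\varphi\left(\frac{1}{x}\right)=a\varphi(x)+b,\qquad x>0,
\]
that is, $\log\gamma(1/x)=a\log\gamma(x)+b$. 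The whole task therefore reduces to showing that the existence of such $a,b$ is equivalent to the functional equation $\gamma(x)\gamma(1/x)=[\gamma(1)]^{2}$.

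For the implication (i)$\Longrightarrow$(ii) I would argue just as in the proof of Proposition~\ref{prop:2}: from the displayed equation, replacing $x$ by $1/x$ and substituting back yields $(1-a^{2})\varphi(x)=(a+1)b$ for all $x>0$; since $\varphi=\log\circ\gamma$ is non-constant, the left-hand side is non-constant unless $a^{2}=1$, so $a=1$ or $a=-1$. If $a=1$, then $b=0$ and $\gamma(1/x)=\gamma(x)$ for every $x>0$, which contradicts the strict monotonicity of $\gamma$ (e.g.\ at $x=2$). Thus $a=-1$; putting $x=1$ in $\log\gamma(1/x)=-\log\gamma(x)+b$ gives $b=2\log\gamma(1)$, and exponentiating the resulting identity $\log\gamma(x)+\log\gamma(1/x)=2\log\gamma(1)$ yields~(ii).

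The converse is immediate: if $\gamma(x)\gamma(1/x)=[\gamma(1)]^{2}$, then $\varphi(1/x)=\log\gamma(1/x)=-\varphi(x)+2\log\gamma(1)$, which is the equation of Proposition~\ref{prop:1}(ii) with $a=-1\neq 0$ and $b=2\log\gamma(1)$; hence $\mathcal{A}_{p_{1},\ldots,p_{n}}^{[\varphi]}$, and therefore $\mathcal{M}_{p_{1},\ldots,p_{n}}^{[\gamma]}$, satisfies~$(\ref{eq:1})$.

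The only delicate point --- essentially the whole content beyond bookkeeping --- is the elimination of the value $a=1$ in the forward direction. It is crucial to invoke the strict monotonicity of $\gamma$ itself, rather than that of $\varphi$: the relation $\gamma(1/x)=\gamma(x)$ is already impossible at a single point $x\neq 1$, so no extra continuity or structural input is needed. Apart from this, the proof is a transcription, through the exponential change of variable, of the algebra already carried out in the additive (odd) setting of Proposition~\ref{prop:2}.
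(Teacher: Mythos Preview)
Your proof is correct and follows essentially the paper's route: reduce via Remark~\ref{rem:6}(ii) to the weighted quasiarithmetic mean $\mathcal{A}^{[\log\circ\gamma]}_{p_1,\ldots,p_n}$ and then use the Proposition~\ref{prop:1}/\ref{prop:2} machinery to pin down the functional equation for the generator. The only cosmetic difference is that the paper passes through Remark~\ref{rem:3} to the $\exp$-conjugate mean and invokes the oddness characterization of Proposition~\ref{prop:2}, whereas you stay on $(0,\infty)$, apply Proposition~\ref{prop:1}, and rerun the $a^{2}=1$ argument directly---the algebra is identical (and your closing remark that one must use the strict monotonicity of $\gamma$ ``rather than that of $\varphi$'' is moot, since $\varphi=\log\circ\gamma$ is strictly monotonic exactly when $\gamma$ is).
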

The main result of this paper reads as follows:

\begin{theorem} \label{the:2}
Let $n\in \mathbb{N}$, $n\geq 2$ and let the positive numbers $%
p_{1},\ldots,p_{n}$ be such that $p_{1}+\ldots+p_{n}=1$. Suppose that $\gamma
:\left( 0,\infty \right) \rightarrow \left( 0,\infty \right) $ is continuous
and strictly monotonic. Then the following conditions are equivalent:
\begin{itemize}
\item[$(i)$] the mean $\mathcal{M}_{p_{1},\ldots,p_{n}}^{\left[ \gamma \right]
}:\left( 0,\infty \right) ^{n}\rightarrow \left( 0,\infty \right) $ is
homogeneous in $\left( 0,\infty \right) $ and 
\[
\mathcal{M}_{p_{1},\ldots,p_{n}}^{\left[ \gamma \right] }\left(
x_{1},\ldots,x_{n}\right) \mathcal{M}_{p_{1},\ldots,p_{n}}^{\left[ \gamma \right]
}\left( \frac{1}{x_{1}},\ldots,\frac{1}{x_{n}}\right) =1,\ \ \ \ \
x_{1},\ldots,x_{n}>0;
\]

\item[$(ii)$] \ there are positive $a,b\in \mathbb{R}$, $a\neq 0$ such that 
\[
\gamma \left( t\right) =e^{b}t^{a}, \quad t>0;
\]

\item[$(iii)$] $\mathcal{M}_{p_{1},\ldots,p_{n}}^{\left[ \gamma \right] }=\mathcal{G}%
_{p_{1},\ldots,p_{n}}$, where 
\[
\mathcal{G}_{p_{1},\ldots,p_{n}}\left( x_{1},\ldots,x_{n}\right)
:=x_{1}^{p_{1}}\cdot \ldots\cdot x_{n}^{p_{n}}, \quad %
x_{1},\ldots,x_{n}>0,
\]
is the weighted geometric mean.
\end{itemize}
\end{theorem}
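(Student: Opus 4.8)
The plan is to prove Theorem~\ref{the:2} by chaining together the
characterizations already in hand. The implications (ii)$\Longrightarrow$(iii)
and (iii)$\Longrightarrow$(i) are the easy directions, so I would dispose of
them first. For (ii)$\Longrightarrow$(iii), simply substitute
$\gamma(t)=e^{b}t^{a}$ into the definition of
$\mathcal{M}_{p_{1},\ldots,p_{n}}^{[\gamma]}$: one computes
$\gamma^{-1}(y)=(e^{-b}y)^{1/a}$, so that
\[
\mathcal{M}_{p_{1},\ldots,p_{n}}^{[\gamma]}(x_{1},\ldots,x_{n})
=\Bigl(e^{-b}\prod_{j=1}^{n}\bigl(e^{b}x_{j}^{a}\bigr)^{p_{j}}\Bigr)^{1/a}
=\Bigl(\prod_{j=1}^{n}x_{j}^{ap_{j}}\Bigr)^{1/a}
=\prod_{j=1}^{n}x_{j}^{p_{j}}
=\mathcal{G}_{p_{1},\ldots,p_{n}}(x_{1},\ldots,x_{n}),
\]
using $\sum p_{j}=1$ to kill the constant $e^{b}$. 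For
(iii)$\Longrightarrow$(i), it remains only to check that
$\mathcal{G}_{p_{1},\ldots,p_{n}}$ is homogeneous, which is immediate from
$\prod(tx_{j})^{p_{j}}=t^{\sum p_{j}}\prod x_{j}^{p_{j}}=t\prod x_{j}^{p_{j}}$,
and that it satisfies~(\ref{eq:1}), which follows from
$\prod x_{j}^{p_{j}}\cdot\prod(1/x_{j})^{p_{j}}=1$. (Equivalently, one may
invoke Remark~\ref{rem:5} for homogeneity and Theorem~\ref{the:1} with
$\gamma=\mathrm{id}$ for~(\ref{eq:1}).)

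The substance is the implication (i)$\Longrightarrow$(ii). Here I would run
the two hypotheses in parallel against the two functional-equation
characterizations available. Homogeneity of
$\mathcal{M}_{p_{1},\ldots,p_{n}}^{[\gamma]}$ combined with
Remark~\ref{rem:6}(ii), which says
$\mathcal{M}_{p_{1},\ldots,p_{n}}^{[\gamma]}=\mathcal{A}_{p_{1},\ldots,p_{n}}^{[\log\circ\gamma]}$,
puts us in the setting of Remark~\ref{rem:5} with generator
$\varphi=\log\circ\gamma$: the homogeneity of
$\mathcal{A}_{p_{1},\ldots,p_{n}}^{[\log\circ\gamma]}$ forces, for some
$\alpha,\beta,r\in\mathbb{R}$ with $\alpha\neq0$,
\[
(\log\circ\gamma)(t)=
\begin{cases}
\alpha\,t^{r}+\beta & \text{if }r\neq0,\\
\alpha\log t+\beta & \text{if }r=0,
\end{cases}
\qquad t>0.
\]
So $\gamma(t)=\exp(\alpha t^{r}+\beta)$ or $\gamma(t)=e^{\beta}t^{\alpha}$
according as $r\neq0$ or $r=0$. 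It remains to use condition~(\ref{eq:1}) to
rule out the case $r\neq0$ and to pin down the sign constraints on the
surviving parameters. By Theorem~\ref{the:1}, (\ref{eq:1}) is equivalent to
$\gamma(x)\gamma(1/x)=[\gamma(1)]^{2}$ for all $x>0$.

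The main obstacle — really the only nontrivial step — is eliminating the
exponential-power family $\gamma(t)=\exp(\alpha t^{r}+\beta)$ with $r\neq0$.
Plugging into the equation of Theorem~\ref{the:1} and taking logarithms gives
$\alpha x^{r}+\beta+\alpha x^{-r}+\beta=2(\alpha+\beta)$, i.e.
$\alpha(x^{r}+x^{-r})=2\alpha$ for all $x>0$; since $\alpha\neq0$, this forces
$x^{r}+x^{-r}=2$ for all $x>0$, which (by strict convexity of
$u\mapsto u+1/u$, with minimum value $2$ attained only at $u=1$) is impossible
for $r\neq0$, as it would require $x^{r}=1$ for every $x>0$. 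Hence $r=0$ and
$\gamma(t)=e^{\beta}t^{\alpha}$. Feeding this back into
$\gamma(x)\gamma(1/x)=[\gamma(1)]^{2}$ gives
$e^{2\beta}x^{\alpha}x^{-\alpha}=e^{2\beta}$, which is automatically satisfied,
so (\ref{eq:1}) imposes no further restriction; the condition
$\gamma:(0,\infty)\to(0,\infty)$ together with continuity and strict
monotonicity already gives $e^{\beta}>0$ automatically and $\alpha\neq0$.
Setting $a:=\alpha$ and $b:=\beta$ yields $\gamma(t)=e^{b}t^{a}$ with
$a\neq0$, which is exactly~(ii), and closes the cycle
(i)$\Longrightarrow$(ii)$\Longrightarrow$(iii)$\Longrightarrow$(i).
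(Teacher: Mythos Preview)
Your proof is correct and follows essentially the same route as the paper: convert to the quasiarithmetic mean via Remark~\ref{rem:6}(ii), apply the homogeneity classification (Remark~\ref{rem:5}) to get the two-parameter family for $\log\circ\gamma$, and then use Theorem~\ref{the:1} to eliminate the case $r\neq0$. Your elimination step is slightly more explicit (deriving $x^{r}+x^{-r}=2$) where the paper simply observes that $t\mapsto\gamma(t)\gamma(1/t)$ is nonconstant, but the argument is the same.
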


\begin{proof}
Assume (i). By Remark \ref{rem:6}(ii) we have $\mathcal{M}_{p_{1},\ldots,p_{n}}^{\left[
\gamma \right] }=\mathcal{A}_{p_{1},\ldots,p_{n}}^{\left[ \log \circ \gamma %
\right] }$. \ The homogeneity of the weighted quasiarithmetic mean $\mathcal{%
A}_{p_{1},\ldots,p_{n}}^{\left[ \log \circ \gamma \right] }$ implies (see \cite{HLP} p.~68) that there are $a,b,r\in \mathbb{R}$, $a\neq 0$ such that 
\[
\log \circ \gamma \left( t\right) =\left\{ 
\begin{array}{ccc}
at^{r}+b & \mbox{if} & r\neq 0 \\ 
a\log t+b & \mbox{if} & r=0%
\end{array}%
\right. , \quad t>0.
\]
If $r\neq 0$ then 
\[
\gamma \left( t\right) =b\exp \left( at^{r}\right) , \quad t>0,
\]
whence 
\[
\gamma \left( t\right) \gamma \left( \frac{1}{t}\right) =b\exp \left(
at^{r}\right) b\exp \left( a\left( \frac{1}{t}\right) ^{r}\right) =b^{2}\exp
\left( a\left( t^{r}+t^{-r}\right) \right) ,
\]
so the function $\left( 0,\infty \right) \ni t\rightarrow \gamma \left(
t\right) \gamma \left( \frac{1}{t}\right) $ is not constant. \newline
If $r=0$ then%
\[
\gamma \left( t\right) =e^{b}t^{a}, \quad t>0,
\]
whence%
\[
\gamma \left( t\right) \gamma \left( \frac{1}{t}\right) =\left(
e^{b}t^{a}\right) \left( e^{b}t^{-a}\right) =\left( e^{b}\right) ^{2}=\left[
\gamma \left( 1\right) \right] ^{2}, \quad t>0.
\]
Now (ii) follows from Theorem \ref{the:1}. \newline
Assume (ii). Then 
\[
\gamma ^{-1}\left( u\right) =e^{-\frac{b}{a}}u^{\frac{1}{a}}, \quad %
u\in \gamma \left( 0,\infty \right) ,
\]
and, taking into account that $p_{1}+\ldots+p_{n}=1,$ we have, for all $%
x_{1},\ldots,x_{n}>0,$%
\begin{eqnarray*}
\mathcal{M}_{p_{1},\ldots,p_{n}}^{\left[ \gamma \right] }\left(
x_{1},\ldots,x_{n}\right) &=&\gamma ^{-1}\left( \prod\limits_{j=1}^{n}\left[
\gamma \left( x_{j}\right) \right] ^{p_{j}}\right) =e^{-\frac{b}{a}}\left(
\prod\limits_{j=1}^{n}\left( e^{b}x_{j}^{a}\right) ^{p_{j}}\right) ^{1/a} \\
&=&\prod\limits_{j=1}^{n}x_{j}^{p_{j}}=\mathcal{G}_{p_{1},\ldots,p_{n}}\left(
x_{1},\ldots,x_{n}\right) ,
\end{eqnarray*}
so (iii) holds true. \newline
Since the implication $(iii)\Longrightarrow (i)$ is obvious, the proof is
complete.
\end{proof}

\begin{remark} \label{rem:7}
Let us note that the main results of this paper can be also extended to the
class generalized weighted quasiarithmetic means of the form%
\[
\mathcal{M}^{\left[ g_{1},\ldots,g_{n}\right] }\left( x_{1},\ldots,x_{n}\right)
:=\left( \prod\limits_{j=1}^{n}g_{j}\right) ^{-1}\left(
\prod\limits_{j=1}^{n}g_{j}\left( x_{j}\right) \right)
\]
where $g_{1},\ldots,g_{n}:\left( 0,\infty \right) \rightarrow \left( 0,\infty
\right) $ are continuous strictly increasing (or strictly decreasing)
functions and $\prod\limits_{j=1}^{n}g_{j}$ is a single variable function
being the product of $g_{1}\ldots g_{n}$ (see \cite{JM2010}). Also in this case,
if $\mathcal{M}^{\left[ g_{1},\ldots,g_{n}\right] }$\ is homogeneous and
satisfies (1), then $\mathcal{M}^{\left[ g_{1},\ldots,g_{n}\right] }=\mathcal{G}%
_{p_{1},\ldots,p_{n}}$ for some positive $p_{1},\ldots,p_{n}$ such that $%
p_{1}+\ldots+p_{n}=1$.
\end{remark}
We conclude this paper with the following

\begin{remark} \label{rem:8}
In the family of all homogeneous weighted quasiarithmetic means, only the
geometric mean is self reciprocally conjugate, i.e. satisfies condition~\mbox{(\ref{eq:1})}.
\end{remark}


\end{document}